\DeclareSymbolFont{extraup}{U}{zavm}{m}{n}
\DeclareMathSymbol{\clubsuit}{\mathalpha}{extraup}{88}
\numberwithin{equation}{section}
\newtheorem{thm}{Theorem}[section] 
\newtheorem{lem}[thm]{Lemma}
\newtheorem*{conj}{Conjecture}
\newtheorem{prob}{Problem}
\newtheorem*{claim}{Claim}
\title{Towards a Baranyai theorem \\ with additional condition}
\author{{\bf Gyula O.H. Katona}\thanks{The work of this author was
supported by the Hungarian National Research, Development and Innovation Office
NKFIH under grant numbers SSN135643 and K132696.}\\
R\'enyi Institute, HUN-REN\\
Budapest Pf 127, 1364 Hungary \\ ohkatona@renyi.hu  \and {\bf
Gyula Y. Katona\thanks{The research of this author  is supported by the BME-Artificial Intelligence FIKP grant of EMMI (BME FIKP-MI/SC), and the research  project K-132696 by the Ministry of Innovation and Technology of Hungary from the National Research, Development and Innovation Fund.}} \\
Department of Computer Science \\
Budapest University of Technology and Economics\\
Budapest, Magyar tud\'osok krt. 2., 1117 Hungary \\
katona.gyula@vik.bme.hu}
\begin{document}
\date{}

\maketitle

\begin{abstract}
A $(k,\ell )$ partial partition of an $n$-element set is a collection of $\ell $ 
pairwise disjoint $k$-element subsets. It is proved that, if $n$ is large enough, one can find 
$\left\lfloor 
{n\choose k}/{\ell}\right\rfloor$ 
such partial partitions in such a way
that if $A_1$ and $A_2$ are distinct  classes in one of the partial partitions,
$B_1$ and $B_2$ are distinct classes in another one, then one of the intersections
$A_1\cap B_1, A_2\cap B_2$ has size at most ${k\over 2}$.

{\it Key Words:} Baranyai theorem, partial partition

\end{abstract}

\section{Introduction}
Let $[n]=\{ 1,2,\ldots n\}$ be an $n$-element set. Let us start with the
classic theorem of Baranyai.

\begin{thm} {\rm (\cite{Ba})} If $k$ divides $m$ then there is a set of
partitions of $[m]$ into $k$-element classes such that each element of
${[m]\choose k}$ is contained in exactly one such partition.
\end{thm}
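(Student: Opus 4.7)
The plan is to reproduce Baranyai's original proof, which reduces the theorem to a matrix rounding lemma (an integrality statement for the bipartite transportation polytope) applied iteratively along a filtration by subset size.

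Set $N=m/k$ and $P=\binom{m-1}{k-1}$; we shall build $P$ partitions $\pi_1,\dots,\pi_P$ of $[m]$ into $k$-classes whose disjoint union is $\binom{[m]}{k}$. For each $j\in\{0,\dots,k\}$, let $B_j$ be a nonnegative integer matrix of shape $P\times\binom{m}{j}$, with rows indexed by the intended partitions and columns by $j$-subsets, and with $B_j(r,S)$ meant to count the $k$-classes of $\pi_r$ containing $S$. Consistency with a genuine partition system forces the column sum $\binom{m-j}{k-j}$ and the refinement identity
\[
\sum_{\substack{T\supset S\\|T|=j+1}}B_{j+1}(r,T)=(k-j)\,B_j(r,S)\qquad(r=1,\dots,P,\ |S|=j).
\]
Conversely, such a sequence with $B_1\equiv\mathbf 1$ yields a valid partition system: the column sums of $B_k$ are $1$, so $B_k$ is $\{0,1\}$-valued with $N$ ones per row, and the refinement chain down to $B_1$ forces each row's $N$ selected $k$-sets to partition $[m]$.

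The base cases $B_0=(N,\dots,N)^T$ and $B_1=\mathbf 1$ (forced by row sums $Nk=m$ and column sums $P$) are immediate. The inductive step from $B_j$ to $B_{j+1}$ rests on the \emph{matrix rounding lemma}: if $A$ is a nonnegative real matrix whose row and column sums are integers, there is a nonnegative integer matrix $\widetilde A$ with the same row and column sums and $\widetilde A_{ij}\in\{\lfloor A_{ij}\rfloor,\lceil A_{ij}\rceil\}$. This is the integrality of the bipartite transportation polytope, provable by max-flow--min-cut or by Hall's theorem on the bipartite graph encoding fractional entries. Start from the fractional prescription $F(r,T)=\frac{k-j}{m-j}B_j(r,S)$ averaged over the $j$-subfaces $S\subset T$, which already has the target integer row and column sums of $B_{j+1}$ and satisfies the refinement identity exactly; the lemma, applied in stages that introduce one integer constraint at a time, rounds $F$ to an integer $B_{j+1}$ meeting all imposed equations.

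The main obstacle is the simultaneous preservation of all constraints: the rounding lemma sees only row and column sums of one matrix, yet we must preserve the $P\cdot\binom{m}{j}$ refinement equations linking $B_{j+1}$ to $B_j$ together with the column-sum targets for $B_{j+1}$. Baranyai's device is to refine one coordinate of $[m]$ at a time, splitting each current column $S$ into a pair of columns according to whether a newly processed element of $[m]$ is appended; at each such split, the fractional shares have integer column totals (by elementary binomial identities), so the step is a bona fide instance of the rounding lemma. Concatenating such elementary steps assembles $B_{j+1}$ from $B_j$, and $k$ level-advancing iterations deliver $B_k$ and hence the theorem.
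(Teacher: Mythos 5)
The paper states this result only as a citation of Baranyai's original article \cite{Ba} and gives no proof of it, so there is no internal argument to compare yours against; I can only measure your proposal against the known proof. Your overall architecture is the right one --- Baranyai's theorem is indeed proved by an induction whose engine is the integrality of a transportation-type polytope --- and your endpoint bookkeeping is correct: $P=\binom{m-1}{k-1}$, the column sums $\binom{m-j}{k-j}$, the refinement identity, and the reading of $B_0$, $B_1$, $B_k$ all check out.

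The gap is in the inductive step. First, the averaged prescription $F(r,T)=\frac{k-j}{m-j}\cdot\frac{1}{j+1}\sum_{S\subset T}B_j(r,S)$ does \emph{not} satisfy the refinement identity exactly: summing $F(r,T)$ over the $m-j$ sets $T\supset S$ yields, besides the term $(m-j)B_j(r,S)$, cross terms $B_j\bigl(r,(S\cup\{x\})\setminus\{y\}\bigr)$ that only collapse to the required multiple of $B_j(r,S)$ when $B_j(r,\cdot)$ is constant in $S$ --- which it cannot be for $j\geq 2$, since for a genuine partition $\pi_r$ the entry $B_j(r,S)$ is $0$ or $1$ depending on whether $S$ lies inside a class. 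Second, and more fundamentally, the refinement identities are $P\binom{m}{j}$ linear constraints that are not the row and column sums of any single matrix, and the matrix rounding lemma guarantees integral solutions only for such transportation systems; the phrase ``applied in stages that introduce one integer constraint at a time'' is precisely the step that needs proof, and it is where a level-by-level scheme indexed by subset size breaks down. The repair you gesture at in your last paragraph --- adjoining one element of $[m]$ at a time --- is the correct device, but it is not compatible with your $B_j$ framework: the genuine induction runs over $s=0,1,\dots,m$ rather than over $j=0,\dots,k$, maintains the invariant that each $S\subseteq [s]$ occurs with multiplicity exactly $\binom{m-s}{k-|S|}$ among $P$ multiset-partitions of $[s]$ into $m/k$ (possibly empty) parts, and realizes the step $s\to s+1$ as a single integral-flow problem with fractional flow $\frac{k-|S|}{m-s}$ per occurrence of $S$, unit supply at each partition node, and integer demands $\binom{m-s-1}{k-|S|-1}$. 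As written, your argument asserts the inductive step but does not establish it.
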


But what do we know about the relationship between two such partitions?
Can we impose some restrictions of this kind? This question seems
to be very difficult but becomes attackable if  partitions are
replaced by the following weaker concept. A $(k,\ell)$ partial partition
or $(k,\ell)$-{\it papartition} is a family of $\ell$ pairwise disjoint
$k$-element sets.  Our restriction will be the following. We say that two
$(k,\ell)$-papartitions are {\it too close} if they contain two distinct members
$A_1, B_1$ in the first papartition and two distinct members
$A_2, B_2$ in the second papartition satisfying
$$|A_1\cap A_2|>{k\over 2},\quad |B_1\cap B_2|>{k\over 2}.$$
Our condition will be that no two papartitions are too close.

In Section 2, we will state our main theorem and its generalization for graphs.
Section 3 contains the proofs, while Section 4 adds some remarks on related problems.

\section{Results}

\begin{thm}\label{thm1}
Let $k$ and $\ell$ be positive integers. If $n$ is large enough then
one can find
$$\left\lfloor \frac{{n\choose k}}{\ell}\right\rfloor$$
$(k,\ell)$-papartitions in such a way that no $k$-element set appears in two
of them and they are not too close to each other.
\end{thm}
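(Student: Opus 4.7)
The bound $\lfloor\binom{n}{k}/\ell\rfloor$ is trivially tight (each $k$-subset can be used at most once), so the theorem asks for a nearly perfect packing of the $k$-subsets into papartitions with the extra ``spread'' property. My plan is to start from a Baranyai decomposition and patch it locally. Assume first that $k\mid n$; the general case should reduce to this via a tail truncation. I would invoke Baranyai's theorem to produce $M=\binom{n-1}{k-1}$ partitions $\Pi_1,\dots,\Pi_M$ of $[n]$ into $n/k$ disjoint $k$-sets, with each $k$-subset appearing in exactly one partition. Splitting each $\Pi_i$ into $\lfloor(n/k)/\ell\rfloor$ ``chunks'' of $\ell$ consecutive sets yields $(k,\ell)$-papartitions with the correct total count and no repeated $k$-set.

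\textbf{Structural insight and probabilistic chunking.} The critical observation is that for any two papartitions $P,P'$ the close pairs $(A,B)\in P\times P'$ with $|A\cap B|>k/2$ form a \emph{matching}: if some $A$ were close to two distinct $B_1,B_2\in P'$, then $|A\cap B_1|+|A\cap B_2|>k$, contradicting $B_1\cap B_2=\emptyset$. Hence ``too close'' is exactly the event that this close matching has at least two edges, and two chunks lying inside the same $\Pi_i$ are automatically safe. Each $k$-subset also has only $D=O(n^{\lceil k/2\rceil-1})$ close partners in total. Now permute the sets within each $\Pi_i$ independently and uniformly at random before chunking. For a fixed pair $(\Pi_i,\Pi_j)$ the close matching has at most $n/k$ edges, each unordered pair of which falls into a single chunk pair with probability $O((\ell k/n)^2)$; linearity of expectation then gives $O(\ell^2)$ bad chunk pairs per $(\Pi_i,\Pi_j)$ and $O(\ell^2 n^{2k-2})$ bad chunk pairs overall, much less than the $\Theta(n^{2k}/\ell^2)$ total number of chunk pairs. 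Fix a realisation meeting this expectation and eliminate each remaining bad pair $(P,P')$ by swapping an offending $A\in P$ with a substitute $C$ drawn from some third chunk $P''$, chosen so that $C$ has no close neighbour in $P'$---possible because only $O(\ell D)\ll\binom{n}{k}$ sets are ruled out.

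\textbf{Main obstacle.} The delicate point is showing that the swap procedure does not cascade into new bad pairs: replacing $A\in P$ by $C$ can create close pairs between the modified $P$ and other chunks, or between the modified $P''$ and other chunks. I plan to handle this by enlarging the forbidden-$C$ set in every swap to also exclude candidates whose insertion would create a new two-edge close matching with any existing chunk, and by processing bad pairs in decreasing order of ``conflict degree''. Since the total number of swaps is polynomially smaller than the number of chunks, only a vanishing fraction of chunks is ever perturbed, and a suitable $C$ remains available at every step. The case $k\nmid n$ requires a partial-partition variant of Baranyai and a small defect at the tail of each $\Pi_i$, but the structural lemma and the repair step carry over verbatim.
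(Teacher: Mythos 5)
Your reduction to Baranyai's theorem already fails at the counting step, before any probabilistic or repair argument begins. For $k\mid n$ Baranyai gives $M=\binom{n-1}{k-1}$ partitions with $n/k$ classes each; cutting each into blocks of $\ell$ consecutive classes produces only $M\lfloor (n/k)/\ell\rfloor$ papartitions. Writing $n/k=q\ell+r$ with $0\le r<\ell$, the target $\lfloor\binom{n}{k}/\ell\rfloor=Mq+\lfloor Mr/\ell\rfloor$ exceeds this by $\lfloor Mr/\ell\rfloor$ whenever $r\ne 0$, i.e.\ you are short by roughly $rM=\Theta(n^{k-1})$ sets' worth of papartitions. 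The leftover classes are pairwise disjoint only inside a single $\Pi_i$, where there are fewer than $\ell$ of them, so assembling the missing papartitions from leftovers of different $\Pi_i$'s is itself a nontrivial packing problem in the disjointness (Kneser-type) graph, and those cross-partition chunks lose the ``same $\Pi_i$, automatically safe'' structure on which your estimates lean. The case $k\nmid n$ compounds this: Baranyai's theorem does not apply as stated, and the ``partial-partition variant with a small tail defect'' you invoke is exactly the part that would have to be supplied. So the claim that chunking ``yields the correct total count'' is a genuine gap, not a formality.

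The repair step is also missing a necessary constraint and leaves the cascade control at the level of intention. Swapping $A\in P$ with $C\in P''$ must additionally require $C$ disjoint from every set of $P-A$ and $A$ disjoint from every set of $P''-C$, otherwise the modified chunks are no longer papartitions; and the forbidden set for $C$ must exclude every candidate creating a two-edge close matching between the modified $P$, or the modified $P''$ (which now contains the offending $A$), and \emph{any} other chunk, not merely $P'$. The counting does survive these enlargements (order $\ell^2D^2+\ell kn^{k-1}\ll\binom{n}{k}$), and if each swap is required to destroy a bad incidence while creating none, the process terminates with no need for your ordering heuristic --- but none of this is carried out, and it is precisely the technical heart of the problem. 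For comparison, the paper bypasses Baranyai and randomness altogether: it proves the general two-graph statement of Theorem \ref{thm2} (a Dirac/Koml\'os--S\'ark\"ozy--Szemer\'edi-type minimum-degree condition yields $\lfloor m/\ell\rfloor$ disjoint copies of $K_\ell$ in $G_1$ avoiding alternating bags with $G_2$) by adding red edges one at a time and repairing with the swap Lemma \ref{lem3}, whose exclusion counts are exactly the ones your sketch would need; Theorem \ref{thm1} then follows by taking $V=\binom{[n]}{k}$ with disjointness as blue and intersection larger than $k/2$ as red, which in particular dissolves the divisibility issues that derail your construction.
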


Now let us see the general theorem for graphs. We  have two
graphs on the same vertex set: $G_1=(V,E_1), G_2=(V,E_2)$.
The edges of $G_1$ and $G_2$ are called {\it blue} and {\it red},
respectively. Denote the minimum degree in $G_1$ by $\delta_1$,
the maximum degree in $G_2$ by $\Delta_2$.
An \emph{alternating-$(\ell,2,\ell,2)$-bag} consists of two
vertex-disjoint copies of $K_{\ell}$ in $G_1$ (i.e. all blue edges) and two vertex-disjoint
edges in $G_2$ (i.e. two red edges) both connecting vertices in different copies of $K_{\ell}$.
\begin{thm}\label{thm2} Let $G_1=(V,E_1)$ and $G_2=(V,E_2)$ be two graphs
where $|V|=m$ and $E_1\cap E_2=\emptyset,\ \ell \geq 2$ is a given integer.
Let the minimum degree in $G_1$ be
$\delta_1$, and the maximum degree in $G_2$ be $\Delta_2$. If
\begin{equation}\label{eq21}
m\left({\ell^2-1\over \ell^2}+\alpha \right)\leq \delta_1\quad {\rm{where}}\quad \ 0<\alpha <{1\over \ell^2}
\end{equation}
and
\begin{equation}\label{eq22}
	\Delta_2\leq \sqrt{{m\alpha -\ell \over 3}}
\end{equation}
hold, $m\geq m(\ell)$ is large enough, then there are $\lfloor {m/\ell}\rfloor$
vertex-disjoint copies of $K_{\ell}$ in $G_1$ so that no
two of these copies span an alternating-$(\ell,2,\ell,2)$-bag.
\end{thm}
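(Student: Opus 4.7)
The plan is to build the $\lfloor m/\ell\rfloor$ cliques one at a time by a greedy procedure, using (\ref{eq21}) to guarantee that many blue $K_\ell$'s remain available at each step and (\ref{eq22}) to guarantee that only a small fraction of them are forbidden by the alternating-bag condition. The endgame, when very few vertices are left uncovered, will require an additional absorbing device.

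First, I would bound from below the number of available $K_\ell$'s. Suppose vertex-disjoint blue cliques $F_1,\ldots,F_t$ have already been chosen; put $V'=V\setminus\bigcup_{i=1}^t F_i$ and $n'=|V'|=m-t\ell$. Every $v\in V'$ has blue degree in $G_1[V']$ at least $\delta_1-t\ell$, which by (\ref{eq21}) exceeds the Hajnal--Szemer\'edi threshold $(1-1/\ell)n'$ by at least $m\alpha$ so long as $n'\geq m/\ell-m\alpha\ell$. A standard supersaturation estimate then produces at least $c_1(\ell,\alpha)(n')^\ell$ copies of $K_\ell$ in $G_1[V']$.

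Next I would bound from above the number of forbidden $K_\ell$'s, namely those that together with some $F_i$ span an alternating-$(\ell,2,\ell,2)$-bag. Such a clique $K\subseteq V'$ must contain two distinct vertices $b_1,b_2$ joined by red edges to distinct vertices of $F_i$. For each $F_i$ the number of such pairs $\{b_1,b_2\}$ is at most $\binom{\ell\Delta_2}{2}$, and each pair extends to at most $\binom{n'-2}{\ell-2}$ copies of $K_\ell$ in $V'$. Summing over the $t\leq m/\ell$ previously chosen cliques and using $\Delta_2^2\leq m\alpha/3$ from (\ref{eq22}), the total number of forbidden cliques is at most $C(\ell)\,m^2\alpha\,(n')^{\ell-2}$. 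The ratio of forbidden to available is therefore $O(m^2/(n')^2)$ (with the constant depending on $\ell$ and $\alpha$), so the greedy step succeeds whenever $n'$ exceeds a threshold of order $m\sqrt\alpha$.

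The main obstacle is the endgame: $n'$ may drop below this threshold before $\lfloor m/\ell\rfloor$ cliques have been placed. I would address this by the absorbing method. Reserve at the outset a random subset $R\subset V$ of size $\Theta(m\sqrt\alpha)$ and show, using the surplus in (\ref{eq21}) together with the small-$\Delta_2$ hypothesis (\ref{eq22}), that with positive probability $R$ carries a family of local \emph{absorbers} --- short packings of blue $K_\ell$-cliques that can be locally rearranged to swallow any sufficiently small residual set of uncovered vertices while preserving the alternating-bag-free property. One then runs the greedy argument on $V\setminus R$ until its residual falls below the threshold, and finishes by invoking $R$. Constructing the absorbing family and verifying that it can simultaneously handle every possible residual without introducing any new alternating bag is the hardest part of the argument; the precise gap $(\ell-1)/\ell^2+\alpha$ between the hypothesis on $\delta_1$ and the Hajnal--Szemer\'edi threshold $1-1/\ell$ is exactly what provides the flexibility needed for absorbers to exist in the required abundance.
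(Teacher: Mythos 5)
There is a genuine gap, and it sits exactly where you place the weight of the argument: the endgame. Your greedy phase is supported by a supersaturation bound that you yourself condition on $n'\geq m/\ell-m\alpha\ell$, and this restriction is not removable: a vertex of $V'$ may have as many as $m\left({1\over \ell^2}-\alpha\right)$ non-neighbours in $G_1$, so the minimum degree of $G_1[V']$ is only guaranteed to be $n'-m\left({1\over \ell^2}-\alpha\right)$, which falls below the Tur\'an threshold ${\ell-2\over \ell-1}n'$ (let alone the Hajnal--Szemer\'edi threshold) once $n'$ drops to roughly a constant fraction of $m$ (about $(\ell-1)m/\ell^2$). Hence the greedy phase can only be guaranteed to place about $m{\ell-1\over \ell^2}$ cliques, and the residual set your reservoir $R$ must absorb has size $\Theta(m)$, not $\Theta(m\sqrt{\alpha})$; note that (\ref{eq22}) allows $\alpha$ to be as small as order $\Delta_2^2/m$, so $m\sqrt{\alpha}$ can even be $O(\sqrt{m\,}\,\Delta_2^{\phantom{1}})$, far below the true breakdown point. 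Your later claim that ``the greedy step succeeds whenever $n'$ exceeds a threshold of order $m\sqrt{\alpha}$'' contradicts your own earlier restriction, and the constant $c_1(\ell,\alpha)$ in the supersaturation estimate degenerates as $n'$ approaches that restriction, so the forbidden-versus-available comparison does not close. On top of this, the absorbing family is only postulated: the property to be preserved is a global pairwise one (every local rearrangement inside $R$ must avoid an alternating bag with \emph{every} clique already placed, via red edges running between $R$ and all of $V$), so the absorbers cannot be certified locally, and you explicitly defer exactly this step. As it stands the proposal is a plan, not a proof, and the plan's quantitative backbone is miscalibrated.

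For comparison, the paper's proof of Theorem \ref{thm2} avoids both difficulties by a different and more elementary mechanism. Lemma \ref{lem1} produces an almost-$\ell$-decomposition of $G_1$ alone, by an edge-maximal counterexample argument (no Hajnal--Szemer\'edi machinery, hence only moderate $m(\ell)$). Then the red edges of $G_2$ are added one at a time; if adding $\{a,b\}$ kills all good decompositions, Lemma \ref{lem3} repairs the last good one by a single vertex swap $A-a+c$, $C-c+a$ with one of the at least $m\ell\alpha-\ell(\ell-1)$ classes $C$ forming a ``compound pair'' with $A$ (Lemma \ref{lem2}); condition (\ref{eq22}) is used only to show that fewer than $3\ell\Delta_2^2$ candidate classes are spoiled by red paths or by the new edge, so a good $C$ survives. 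If you want to pursue the greedy-plus-absorption route, you would in effect need a tiling theorem for $G_1$ that is robust under the red-edge constraint with a linear-sized flexible structure --- substantially more than the exchange argument the theorem actually requires.
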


\section{Proofs}
First we prove Theorem \ref{thm2} by a series of Lemmas.
\begin{lem}\label{lem1} Let $G=(V,E)$ be a graph with $|V|=m$. Suppose that
$0< \alpha < {1/ {\ell}^2}$ and the minimum degree
in $G$ is at least
\begin{equation}\label{eq31}
m\left(\frac{\ell^2-1}{\ell^2}+\alpha \right) .
\end{equation}
If  $m$ is large enough then $G$ contains $\lfloor {m/\ell}\rfloor$ vertex-disjoint copies
of $K_{\ell}$.
\end{lem}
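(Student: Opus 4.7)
The plan is to derive Lemma~\ref{lem1} as an immediate corollary of the Hajnal-Szemer\'edi theorem. That theorem guarantees $\lfloor m/\ell\rfloor$ pairwise vertex-disjoint copies of $K_\ell$ in every graph on $m$ vertices whose minimum degree is at least $(1-1/\ell)m$ (a full $K_\ell$-factor when $\ell\mid m$, otherwise leaving at most $\ell-1$ vertices uncovered). All that needs checking is the numerical comparison
\[
m\left(\frac{\ell^2-1}{\ell^2}+\alpha\right) \;=\; m\left(1-\frac{1}{\ell^2}+\alpha\right) \;>\; m\left(1-\frac{1}{\ell}\right),
\]
which holds because $\ell\ge 2$ forces $1/\ell^2<1/\ell$ and because $\alpha>0$. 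Thus the hypothesis \eqref{eq31} is strictly stronger than what Hajnal-Szemer\'edi requires, and the conclusion follows at once.

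The ``$m$ large enough'' clause is vestigial for this lemma in isolation; it is retained for consistency with the later lemmas in Section~3, where the slack $\alpha$ from \eqref{eq31} must be balanced against $\Delta_2$ via \eqref{eq22}. If instead one wants a self-contained proof avoiding the Hajnal-Szemer\'edi black box, my plan is threefold: (i) show by greedy clique extension that every $(\ell-1)$-clique in $G$ extends to a $K_\ell$, using the common-neighborhood estimate
\[
|N(v_1)\cap\cdots\cap N(v_{\ell-1})| \;\ge\; m-(\ell-1)(m-\delta_1) \;\ge\; m\!\left(\frac{\ell^2-\ell+1}{\ell^2}+(\ell-1)\alpha\right) \;>\;0;
\]
(ii) iteratively remove such $K_\ell$'s from $G$, updating the minimum-degree bound each time; and (iii) close up the last few stubborn vertices with an absorbing-method step.

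The main obstacle in this direct route is step (iii): because $\alpha<1/\ell^2$, a straightforward calculation shows that pure greedy packing stops short of $\lfloor m/\ell\rfloor$ by a linear margin, so one must pre-allocate at the outset a small reservoir of ``absorber'' configurations capable of swapping in each leftover vertex. The $\alpha m$ slack built into \eqref{eq31} is precisely the resource that funds this reservoir and closes the gap. For the present lemma the Hajnal-Szemer\'edi citation is by far the cleanest route; I would only develop the direct absorbing argument if a quantitatively sharper statement were needed for the subsequent lemmas.
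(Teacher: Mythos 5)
Your proposal is correct in substance, but it proves the lemma by a genuinely different route than the paper. The paper gives a short, self-contained extremal argument: take a minimal counterexample, add edges one by one until $\lfloor m/\ell\rfloor$ disjoint copies of $K_\ell$ appear, look at the last added edge $e$ and the copy $A$ containing it, and show by a degree count that some other copy $B$ is joined to $A$ by a complete bipartite $K_{\ell,\ell}$ in $G'-e$; then $A\cup B$ spans $K_{2\ell}$ minus one edge, which still splits into two disjoint $K_\ell$'s, contradicting minimality. Your route instead invokes the Hajnal--Szemer\'edi theorem as a black box; this does imply the lemma, and in fact under the much weaker hypothesis $\delta\geq(1-1/\ell)m$ rather than \eqref{eq31}, so it buys a stronger statement, whereas the paper's approach buys elementarity and an explicit, modest threshold on $m$ --- exactly the trade-off the authors discuss in Section 4 when they explain why they prove this weak form rather than cite the Koml\'os--S\'ark\"ozy--Szemer\'edi theorem. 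One caveat: the version of Hajnal--Szemer\'edi you quote (that $\delta\geq(1-1/\ell)m$ already yields $\lfloor m/\ell\rfloor$ disjoint copies when $\ell\nmid m$, i.e.\ at most $\ell-1$ uncovered vertices, at the exact threshold) is not the standard statement, which concerns perfect $K_\ell$-factors when $\ell\mid m$; to be safe you should first delete $m-\ell\lfloor m/\ell\rfloor\leq\ell-1$ arbitrary vertices and apply the divisible case to the rest, which is legitimate here because the slack $\alpha m$ (indeed the gap between $1/\ell^2$ and $1/\ell$) absorbs the loss of at most $\ell-1$ in the minimum degree once $m$ is large. With that small adjustment your argument is complete; the absorbing-method sketch in (i)--(iii) is unnecessary for this lemma.
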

\begin{proof} We will call such a system of vertex-disjoint copies of $K_{\ell}$ an 
\emph{almost-$\ell $-decomposition} of the graph $G$. If $G=K_m$ then the statement is obviously true. Suppose that $G$ is
a counter-example and add edges one by one until the statement is true.
Let $G^{\prime}$ be this graph and $e$ be the last edge added. Then
$G^{\prime}-e$ is also a counter-example, $G^{\prime}$ contains an
almost-$\ell$-decomposition,
$\lfloor {m/ \ell}\rfloor$ vertex-disjoint copies
of $K_{\ell}$. The same can be said about $G^{\prime}-e$, except that $e$
is missing from one copy of $K_{\ell}$. Let $A$ denote its vertex set.

\begin{claim}
There is another copy of $K_{\ell}$ among the ones above, its vertex
set denoted by $B$, such that $G^{\prime}-e$ contains a complete bipartite
$K_{\ell ,\ell}$ spanned by $A$ and $B$. \end{claim}

\begin{proof} In order to prove the claim suppose the contrary: none of the copies of $K_{\ell}$
can be chosen as $B$. If the total number of edges starting from $A$ and ending at
a fixed copy is $\ell^2$ then we found a  $K_{\ell ,\ell}$. Hence, it can be supposed
that this number of edges is at most $\ell^2-1$. The number of edges starting in $A$
and ending in any copy of $K_{\ell}$ is at most
$\left(\lfloor {m/\ell}\rfloor-1\right)(\ell^2-1)$. But there are at most $\ell -1$
vertices not included in the $K_{\ell}$ copies. Therefore, the total number of edges having
exactly one end in $A$ is at most
$$\left(\left\lfloor {m\over \ell}\right\rfloor-1\right)(\ell^2-1)+\ell (\ell -1).$$
Each vertex of $A$ is also connected to the $\ell-1$ other vertices of $A$. These imply  that $A$ has a vertex of degree at most
$${\left(\lfloor {m\over \ell}\rfloor-1\right)(\ell^2-1)\over \ell}+ (\ell -1)+(\ell -1).$$
This can be upper-bounded by
$$m{\ell^2-1\over \ell^2}+ 2(\ell -1).$$
If $m>{2(\ell -1)\over \alpha}$ then this is smaller than the minimum degree condition (\ref{eq31}). This contradiction proves the claim.
\end{proof}

The Claim implies that $A\cup B$ spans a complete graph minus one edge $e$. It is obvious that this
contains two vertex disjoint copies of $K_{\ell}$. Replacing $A$ and $B$ by them, an
almost-$\ell$-decomposition of $G^{\prime}-e$ is obtained,
contradicting our assumption. 
\end{proof}

The vertex sets of the complete graphs $K_{\ell}$ will be called {\it classes}
and will be denoted by upper case letters.

\begin{lem}\label{lem2} Under the conditions of Lemma \ref{lem1}, for each copy of $K_{\ell}$ with
vertex set $C$, there are at least
\begin{equation}\label{eq32}
m\ell \alpha -\ell (\ell -1)
\end{equation}
other copies with vertex sets
$D_1, D_2,\ldots$ such that $C\cup D_i$ spans a complete graph on $2\ell$ vertices.
\end{lem}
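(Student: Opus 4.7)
The plan is to fix a class $C$ in the almost-$\ell$-decomposition guaranteed by Lemma \ref{lem1}, and to count how many of the $\lfloor m/\ell\rfloor - 1$ other classes $D$ have the property that $C \cup D$ spans $K_{2\ell}$. Since each class is already a clique and distinct classes are vertex-disjoint, $C \cup D$ spans $K_{2\ell}$ if and only if every one of the $\ell^2$ potential edges between $C$ and $D$ is actually present in $G$. Hence the problem reduces to bounding from above the number of non-edges leaving $C$.

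The first step is to apply the degree hypothesis (\ref{eq31}). Writing $d := m\bigl(\frac{\ell^2-1}{\ell^2} + \alpha\bigr)$, each $v \in C$ has at least $d$ neighbors in $G$, and hence at most $m - 1 - d = \frac{m}{\ell^2} - m\alpha - 1$ non-neighbors; since $C$ is a clique, all of these must lie in $V \setminus C$. Summing over the $\ell$ vertices of $C$ bounds the total number of non-edges between $C$ and $V \setminus C$ by
$$\ell(m - 1 - d) \;=\; \frac{m}{\ell} - m\ell\alpha - \ell.$$

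The second step is a pigeonhole observation: if some class $D \ne C$ in the decomposition fails to form $K_{2\ell}$ with $C$, then at least one of the edges between $C$ and $D$ is missing, so the number of such ``bad'' classes is at most the non-edge bound above. Subtracting from the $\lfloor m/\ell\rfloor - 1$ other classes and using $\lfloor m/\ell\rfloor \geq (m - \ell + 1)/\ell$ to clear the floor, the number of ``good'' classes is at least
$$\Bigl\lfloor \frac{m}{\ell} \Bigr\rfloor - 1 - \Bigl(\frac{m}{\ell} - m\ell\alpha - \ell\Bigr) \;\geq\; m\ell\alpha - \ell(\ell-1),$$
which is exactly (\ref{eq32}), with additive slack to spare.

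There is no substantive obstacle here; the argument is essentially bookkeeping around the minimum degree hypothesis. The only point requiring a moment's attention is that the non-neighbors of $v \in C$ could in principle lie inside $C$, but the fact that $C$ is a clique rules this out and keeps the non-edge count as tight as stated. The at most $\ell - 1$ leftover vertices outside the decomposition can only absorb non-edges that would otherwise contribute to ``bad'' classes, so simply ignoring that saving is safely wasteful and still yields (\ref{eq32}).
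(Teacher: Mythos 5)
Your proof is correct and is essentially the paper's argument in complementary form: the paper lower-bounds the edges leaving $C$ by the degree sum and upper-bounds them by giving each non-$K_{2\ell}$-forming class at most $\ell^2-1$ cross edges, while you count the at most $\ell\bigl(m-1-d\bigr)$ non-edges leaving $C$ and charge at least one to each bad class — the same double count, read from the other side. Both yield (\ref{eq32}) (yours with a little extra slack), so no further comparison is needed.
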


\begin{proof} Let $r=r(C)$ be the number of proper $D$s for the given $C$. The number of edges
having one end in $C$ is at most
\begin{equation}\label{eq33}
	r\ell^2+\left( \left\lfloor {m\over \ell}\right\rfloor -1-r \right)(\ell^2- 1)+
\ell (\ell -1)\leq r+{m\over \ell}(\ell^2- 1)+\ell (\ell -1).
\end{equation}
However, (\ref{eq21}) gives a lower bound on this quantity:
\begin{equation}\label{eq34}
	m\ell \left({\ell^2-1\over \ell^2}+\alpha \right).
\end{equation}
Comparing (\ref{eq33}) and (\ref{eq34}) the desired inequality (\ref{eq32}) is obtained.
\end{proof} 

Two classes spanning a $K_{2\ell}$ will be called a {\it compound pair}.

\begin{lem}\label{lem3} Let $G_1$ and $G_2$ be two graphs satisfying the conditions
of Theorem \ref{thm2}. Suppose that $m$ is large enough and let $A$ and $B$ be
the vertex sets of two classes in the
almost-$\ell$-decomposition of  $G_1$. Let $a\in A$ and
$b\in B$ where  $\{ a,b\}$ is not an edge in $G_1\cup G_2$ (i.e. it is neither blue nor red). 

If $G_1$ and $G_2$ do not form an
alternating-$(\ell,2,\ell,2)$-bag then there exist a class $C$ and an element
$c\in C$ such that $A$ and $C$ form a compound pair, furthermore $A-a+c, C-c+a$
and the other classes form an almost-$\ell$-decomposition without an
alternating-$(\ell,2,\ell,2)$-bag, even if the edge $\{ a,b\}$ is added to $G_2$ (i.e. becomes red).
\end{lem}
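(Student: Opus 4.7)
The plan is to locate the required pair $(C,c)$ by a counting argument on the candidates supplied by Lemma~\ref{lem2}. That lemma shows $A$ has at least $N\geq m\ell\alpha-\ell(\ell-1)$ compound partners, and for each such $C$ there are $\ell$ choices of $c\in C$. Because $A\cup C$ spans a $K_{2\ell}$ in $G_1$, the swapped sets $A'=A-a+c$ and $C'=C-c+a$ are each a copy of $K_\ell$ in $G_1$, so the new family is automatically an almost-$\ell$-decomposition; the only question is whether, once $\{a,b\}$ is added to $G_2$, the updated decomposition contains an alternating-$(\ell,2,\ell,2)$-bag. I call such $(C,c)$ \emph{bad} and aim to show they number strictly fewer than $N\ell$.

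Writing $R(S,T)$ for the red edges with one endpoint in $S$ and the other in $T$, the first observation is that $A\cup C$ is complete in $G_1$ and hence, because $E_1\cap E_2=\emptyset$, contains no red edge at all, so the pair $(A',C')$ is automatically safe. Pairs of classes not involving $A'$ or $C'$ are unchanged, so by the no-bag hypothesis the red edges between any two of them already have matching at most one. The four families that might now support a new bag are therefore: (i)~$(A',B)$, with red set $R(A-a,B)\cup R(c,B)$; (ii)~$(C',B)$, with red set $R(C-c,B)\cup R(a,B)\cup\{\{a,b\}\}$; (iii)~$(A',X)$ for each unchanged class $X$, with red set $R(A-a,X)\cup R(c,X)$; and (iv)~$(C',X)$, with red set $R(C-c,X)\cup R(a,X)$. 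In each family the restriction to the old pair already had matching at most one, so a matching of size two must include at least one red edge incident to the freshly placed vertex ($c$ in $A'$ or $a$ in $C'$) or the new edge $\{a,b\}$ itself.

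Bounding bad pairs via $\Delta_2$: for (i), $c$ is bad only if it has a red edge to a ``wrong'' vertex of $B$, and there are at most $\ell\Delta_2$ vertices meeting $B$ in red, each in at most one compound partner of $A$, giving $\leq\ell\Delta_2$ bad pairs. For (ii), each compound partner $C$ with a red edge to $B$ can spoil all $\ell$ choices of $c$, and the number of such partners is at most $\ell\Delta_2$, giving $\leq\ell^2\Delta_2$. For (iii), one sums over the at most $(\ell-1)\Delta_2$ outside classes $X$ with $R(A-a,X)\neq\emptyset$; each admits at most $\ell\Delta_2$ bad $c$ (those with a red edge to $X$), yielding $\leq\ell^2\Delta_2^2$. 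An analogous argument for (iv), summing over the at most $\Delta_2$ classes $X$ with $R(a,X)\neq\emptyset$ and contributing at most $\ell^2\Delta_2$ bad pairs per $X$, adds another $\leq\ell^2\Delta_2^2$. Altogether the bad count is at most $2\ell^2\Delta_2^2+O(\ell^2\Delta_2)$, while $N\ell\geq m\ell^2\alpha-\ell^2(\ell-1)$; assumption~(\ref{eq22}) rewrites as $3\Delta_2^2\leq m\alpha-\ell$, so $2\ell^2\Delta_2^2<m\ell^2\alpha-\ell^2(\ell-1)$ once $m$ is large compared with $\ell$, and a good pair $(C,c)$ must exist.

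I expect the main obstacle to be the case analysis inside (iii) and (iv), where $R(A-a,X)$ or $R(C-c,X)\cup R(a,X)$ may already be a nontrivial star of matching one: I have to determine precisely which inserted vertex produces a vertex-disjoint second red edge, so that the crude bounds ``$\ell\Delta_2$ bad $c$ per $X$'' and ``$\ell^2\Delta_2$ bad pairs per $X$'' really capture the bad set and are not pessimistic by more than an allowable constant. The coefficient $3$ in~(\ref{eq22}) leaves only a factor of roughly $3/2$ of slack over the dominant $2\ell^2\Delta_2^2$ contribution, so the counting in these two cases cannot afford to be too wasteful.
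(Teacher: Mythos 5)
Your proposal is correct and follows essentially the same route as the paper: use Lemma~\ref{lem2} to supply at least $m\ell\alpha-\ell(\ell-1)$ compound partners of $A$, observe that a new bag must use the transplanted vertex or the edge $\{a,b\}$, and bound the bad configurations by red-degree ($\Delta_2$) path counts, finishing with condition~(\ref{eq22}). The only difference is bookkeeping — you count bad pairs $(C,c)$ against $N\ell$ rather than bad classes $C$ against $N$ (the paper instead divides its $\ell^2\Delta_2^2$ estimate by $\ell$ since a class is bad only if every $c$ fails) — and your constants still fit comfortably within~(\ref{eq22}).
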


\begin{proof} By Lemma \ref{lem2} there are at least $m\ell \alpha -\ell (\ell -1)$ classes $C$ forming a double
pair with $A$. Interchanging $a$ and $c$ in them keeps this property,
therefore they cannot form an alternating-$(\ell,2,\ell,2)$-bag.
We will see that the number of classes $C$ not satisfying the other properties
 in Lemma \ref{lem3} is less than $m\ell \alpha -\ell (\ell -1)$.

\begin{claim} The number of $C$s such that $C-c+a$ forms an alternating-$(\ell,2,\ell,2)$-bag
with some class $D$ is less than
\begin{equation}\label{eq35}\ell \Delta_2^2.
\end{equation}
\end{claim}

\begin{proof}
In order to prove the claim, let us first notice that there are no two red edges between
$C$ and $D$ therefore, if there are two red edges between $C-c+a$ and $D$ then one of the
red edges must start at $a$. Let $d\in D$ be the other end of this edge, moreover let
the other red edge be $\{f,g\}$ where $f\in D, g\in C, g\not= c$. Give an upper estimate
on the number of paths $\{ a,d,f,g\}$. Starting from $a$ we have at most $\Delta_2$
choices for $d$. Since both $d$ and $f$ are in $Z$, $f$ can be chosen in $\ell-1<\ell$ ways.
Finally, the last red edge from $f$ to $g$ can be chosen in at most $\Delta_2$ ways. Their
product, $\ell \Delta_2^2$ is a strict upper estimate on the number of these paths, but this must be
also an upper estimate on the number of possible $C$s, proving the claim.
\end{proof}

It is also possible that $C$  is such that there is a class $D$ for every
choice of $c\in C$ which forms an alternating-$(\ell,2,\ell,2)$-bag with $A-a+c$.
Then there is a red edge between $s\in A, s\not= a$ and $t\in D$ and another red edge
between $u\in D$ and $c$. We have a red-blue-red path from $s$ to $c$. The number of these
paths is less than $\ell ^2 \Delta_2^2$. But $C$ is a bad choice here only if there is such
a path to its every element $c\in C$. Therefore the above estimate can be divided by $\ell$.
The number of bad $C$s is in this case is less than $\ell \Delta_2^2$, again.

Finally, we give an upper estimate on the number of possible $C$s creating an
alternating-$(\ell,2,\ell,2)$-bag when $\{a,b\}$ turns red. This edge is connecting $B$ and $C-c+a$,
therefore there must be another red edge between $B$ and $C$. The number of choices is less than
\begin{equation}\label{eq36}\ell \Delta_2. 
\end{equation}	

By (\ref{eq35}) and (\ref{eq36}) the total number of bad $C$s is less than
$$ 2\ell \Delta_2^2+\ell \Delta_2< 3\ell \Delta_2^2.$$
There is a good $C$ if (\ref{eq34}) is not smaller:
$$m\ell \alpha -\ell (\ell -1)\geq \ell \Delta_2^2.$$
But this is a consequence of (\ref{eq22}). \end{proof}

\begin{proof}[Proof of Theorem \ref{thm2}] Suppose that the conditions of the theorem hold for the graphs
$G_1$ and $G_2$. If $G_2$ is replaced by the empty graph then Lemma \ref{lem1} ensures the existence
of an almost-$\ell$-decomposition of $G_2$ without an  alternating-$(\ell,2,\ell,2)$-bag.
 Add edges of $G_2$ one by one until we cannot find such an almost-$\ell$-decomposition of $G_1$.
 That is, adding the red edge $\{a,b\} ( a\in A, b\in B)$ there is no proper solution, but without it
 there is one. Take this latter almost-$\ell$-decomposition of $G_1$ and apply Lemma \ref{lem3}  for it.
 An  almost-$\ell$-decomposition is obtained without $A_1\cap B_1$, although
 $\{a,b\}$ is included. This contradiction proves the statement. \end{proof} 

\begin{proof}[Proof of Theorem \ref{thm1}] The result of Theorem \ref{thm2} is used. Let the vertex set be
 $V={[n]\choose k}$, the family of all $k$-element subsets of $[n]$. Two vertices are adjacent in $G_1$
 if the corresponding $k$-element subsets are disjoint. Then a $(k,\ell )$-papartition
 corresponds to a copy of $K_{\ell}$ in $G_1$. Two vertices in $G_2$ are adjacent if
the intersection of the corresponding $k$-element subsets has a size more than ${k/2}$.
Two such papartitions are too close if and only if the corresponding copies of $K_{\ell}$ form
an  alternating-$(\ell,2,\ell,2)$-bag.
Therefore we only have to check the validity of the inequalities (\ref{eq21}) and (\ref{eq22}) for this graph.

$G_1$ is regular of degree ${n-k\choose k}$, therefore if we choose $\alpha ={1\over 2\ell^2}$ then
(\ref{eq21})  becomes
$${n\choose k}\left(1-{1\over 2\ell^2}\right)\leq {n-k\choose k}.$$
The ratio of the two binomial coefficients tends to 1 with $n$, hence the inequality holds for large $n$s.

$G_2$ is also regular, its degree is
$$\sum_{1\leq i<{k\over 2}}{k\choose i}{n-k\choose i}\leq O(n^{{k-1\over 2}}).$$
But the right hand side of (\ref{eq22}) in this case is asymptotically a constant times $n^{k\over 2}$,
 proving the inequality and the theorem. \end{proof} 

\section{Remarks, problems}
The story has started with the following theorem of Dirac.
\begin{thm} {\rm \cite{D}} If the minimum degree in a graph of $m$ vertices is at least ${m\over 2}$ then the graph contains a Hamiltonian cycle (going through every vertex exactly once).
\end{thm}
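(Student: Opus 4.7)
The plan is to use the classical rotation/longest-path argument, which only exploits the degree bound in a pigeonhole step. First I would establish that $G$ is connected: if $G$ had two components, the smaller one would have at most $m/2$ vertices, so a vertex in it could have degree at most $m/2-1$, contradicting $\delta(G)\geq m/2$. This connectedness will be needed only at the very end, to upgrade a long cycle into a Hamiltonian one.

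Next I would take a longest path $P=v_1v_2\cdots v_k$ in $G$. By maximality, every neighbor of $v_1$ and every neighbor of $v_k$ lies on $P$. The key step is to find an index $i$ with $v_1v_{i+1}\in E$ and $v_iv_k\in E$, because then the sequence
\[
v_1,v_2,\ldots,v_i,v_k,v_{k-1},\ldots,v_{i+1},v_1
\]
is a cycle $C$ using exactly the vertices of $P$. To produce such an $i$, define $S=\{i:v_1v_{i+1}\in E\}$ and $T=\{i:v_iv_k\in E\}$; both are subsets of $\{1,2,\ldots,k-1\}$ of size at least $m/2$, so if they were disjoint we would need $k-1\geq m$, which is impossible. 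Hence $S\cap T\neq\emptyset$, giving the desired $i$.

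Finally I would promote the cycle $C$ on $V(P)$ to a Hamiltonian cycle. If $V(C)\neq V(G)$, pick $w\notin V(C)$. Since $G$ is connected, $w$ has a path to some vertex of $C$, so in particular there is an edge from some $w'\notin V(C)$ to a vertex $v\in V(C)$. Deleting an edge of $C$ incident to $v$ and appending $w'$ yields a path strictly longer than $P$, contradicting the maximality of $P$. Hence $V(C)=V(G)$ and $C$ is Hamiltonian.

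The main obstacle is really only the core pigeonhole step producing the index $i$; once the rotation construction of $C$ is in place, the connectivity argument at the end is routine. I would expect the bulk of the writeup to justify carefully that the reversed segment $v_k,v_{k-1},\ldots,v_{i+1}$ indeed closes up with the edges $v_1v_{i+1}$ and $v_iv_k$, i.e.\ the bookkeeping of which endpoints are joined, rather than any deep estimate.
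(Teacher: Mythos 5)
Your argument is the standard longest-path/rotation proof of Dirac's theorem and it is correct: the pigeonhole step on $S,T\subseteq\{1,\dots,k-1\}$ with $|S|,|T|\ge m/2$ and the final extension via connectedness are exactly the classical reasoning (the only omitted triviality is the hypothesis $m\ge 3$, needed since $K_2$ satisfies the degree bound but has no Hamiltonian cycle). The paper itself gives no proof here --- it simply cites Dirac \cite{D} --- so there is nothing in the text to compare your route against; your writeup would stand as a self-contained proof of the quoted theorem.
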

Actually, our Lemma \ref{lem1} is a very weak form of Dirac's theorem if $\ell=2$. The condition is more than ${3\over 4}m$
rather than ${m\over 2}$ and we obtained only every second edges of the Hamiltonian cycle. But there is a Dirac type theorem
for larger $\ell$. The $p$th power $H^p$ of a Hamiltonian cycle $H$  is obtained by joining every pair of vertices not farther than $p$ along the cycle. Seymour \cite{S} conjectured, Koml\'os, G. S\'ark\"ozy and Szemer\'edi \cite{KSSz} proved that if the minimum degree in a graph is at least $m{p-1\over p}$ and $m$ is large enough, then the graph contains $H^p$ as a subgraph. Our Lemma \ref{lem1} is a weakening of this theorem
($p=\ell-1$). We proved here this weaker version because its proof is much easier than the really deep proof in \cite{KSSz}. Also, it does not need such a very large value of $m$.

This comparison opens a new problem: find an extension of Theorem \ref{thm2} in this direction.
\begin{prob} Under what degree conditions can we say that $G_1$ contains an $H^{\ell -1}$ such that it does not form an 
alternating-$(\ell,2,\ell,2)$-bag with two red edges?
\end{prob}

\begin{prob} Improve the constants in our theorems, decreasing the thresholds in $m$ and $n$.  
\end{prob}

We are sure that our results do not reach the boundaries of the method. Define the {\it size} of a $(k,\ell)$-papartition as $k\ell $. 
\begin{prob} For given $n$ maximize the size of the papartitions satisfying the condition in Theorem \ref{thm2}.
Can it be constant times $n$?
\end{prob} 

Can it be $n$? Even if the answer is yes, the present method is not strong enough to prove it. If this is too hard to prove, there might be other similar
conditions instead of our concept ``too close''.

\begin{prob} (Baranyai theorem with an additional condition.) Suppose $k|n$. Find a non-trivial relation $R$ between two partitions of an $n$-element set into $k$-element subsets in such a way that the family of all $k$-element subsets can be decomposed into such partitions in such a way that no two of them are in relation with rspect to $R$.
\end{prob}

There are many open problems related to the Baranyai theorem. Let us popularize an old and difficult one.
First a {\it wreath} is defined as follows.  Take a cyclic permutation of the elements of $[n]$ and consider only intervals along the cyclic permutation. Choose one interval, then take the interval starting immediately after the end of the first interval. The third one starts after the end of the second interval, and so on.
If $k|n$ then this ends after choosing ${n/ k}$ intervals and in this case  the wreath consists of these ${n/ k}$ intervals.
In general we do not stop after the first round, only when our interval fits to the initial interval. That is if ${\rm lcm}(n,k)$ is the least common multiple then we stop
after ${{\rm lcm}(n,k)/ k}$ intervals and go around ${{\rm lcm}(n,k)/n}$ times.

\begin{conj}[The wreath conjecture.] { The family of all $k$-element subsets of $[n]$ can be decomposed into disjoint wreaths.} 
\end{conj}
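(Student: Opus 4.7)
The strategy is to adapt Baranyai's original proof scheme to the wreath setting, in three stages. First I would verify the counting: a wreath contains $\mathrm{lcm}(n,k)/k = n/\gcd(n,k)$ blocks, so the number of wreaths required in the decomposition equals
\[
\frac{\gcd(n,k)}{n}\binom{n}{k}=\frac{\gcd(n,k)}{k}\binom{n-1}{k-1},
\]
which one checks is always a non-negative integer, so at least there is no arithmetic obstruction.

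Next I would build a natural fractional decomposition via symmetry. Each wreath is determined by a cyclic order on $[n]$ together with a starting interval modulo $k$, and the symmetric group $S_n$ acts on the resulting set of wreaths; this action is transitive both on the wreath set and on $\binom{[n]}{k}$. Averaging over $S_n$ yields a rational decomposition in which every $k$-subset is covered exactly once and every wreath carries the same weight. The conjecture is then the statement that this fractional decomposition can be integerized, in direct analogy with Baranyai's original problem.

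The third and hardest step is the integerization. Here the proof cannot simply mimic Baranyai: his flow/matroid rounding works because partitions enjoy strong local flexibility — swapping two elements between classes preserves the partition property — whereas wreaths are far more rigid, being determined by a single cyclic order in which almost any local swap destroys the ``consecutive intervals'' structure. I would try two routes. The first is algebraic: when $\gcd(n,k)=1$ a wreath has length $n$ and corresponds to a Hamiltonian-type walk in an auxiliary circulant graph, so number-theoretic constructions (difference families, starters of the kind used to build Steiner systems, perfect $1$-factorizations) may yield explicit decompositions in special arithmetic cases, which can then be stitched together by varying $\gcd(n,k)$. The second is a modern absorbing argument: reserve a small, carefully engineered family of ``flexible'' wreaths at the start, run a random or greedy covering on the bulk of $\binom{[n]}{k}$, and then use the reservoir to absorb the small residue of leftover $k$-subsets.

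The main obstacle — and the reason the conjecture has stood for so long — is precisely the tension between the rigidity of wreaths and the enormous number of $k$-subsets one must cover: there is essentially no room for local repair. I would expect any successful attack to separate cases according to arithmetic properties of $\gcd(n,k)$, with the case $k=2$ (an Eulerian decomposition of $K_n$ into $n$-cycles) serving as the template, and the construction of a usable absorbing reservoir — if one exists at all in this highly constrained setting — being the real technical heart of the argument.
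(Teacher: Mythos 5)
There is a genuine gap, and it is the whole of the matter: the statement you were asked about is an open conjecture (posed by Baranyai and the first author, in print since 1991), the paper offers no proof of it, and your submission is not a proof either. What you have written is a research programme: the counting check and the symmetric-group averaging argument give, at best, a fractional relaxation (and note that the divisibility of $\gcd(n,k)\binom{n}{k}/n$ is a classical but not entirely trivial fact that you assert rather than verify), while the decisive third step --- converting the fractional decomposition into an actual partition of $\binom{[n]}{k}$ into wreaths --- is exactly where you stop. Proposing two possible routes (number-theoretic constructions for special values of $\gcd(n,k)$, or an absorbing argument with a reservoir of flexible wreaths) and candidly noting that wreaths are too rigid for Baranyai-style local repair does not discharge that step; it restates the difficulty that has kept the conjecture open. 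So no part of your text establishes the conjecture, even in special cases beyond what is classically known.

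Two smaller corrections. Your description of the case $k=2$ is imprecise: for $n$ odd a wreath is the edge set of a Hamiltonian cycle, so the conjecture there is the classical Hamiltonian decomposition of $K_n$ (Walecki), not an ``Eulerian decomposition into $n$-cycles''; for $n$ even a wreath is a perfect matching and the conjecture reduces to the existence of a $1$-factorization. Also, Baranyai's integerization is not a matter of ``swapping two elements between classes''; it is an integral-flow (network rounding) argument applied to a cleverly chosen induction, so the analogy you draw, while pointing in the right direction, misattributes where the flexibility in his proof comes from. If you wish to contribute here, the honest framing is: the conjecture remains open, and your proposal identifies (correctly) that any attack must either exploit arithmetic structure or build an absorbing mechanism compatible with the single-cyclic-order rigidity of wreaths --- but neither has been carried out.
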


This was jointly conjectured by Zsolt Baranyai and  the first author. Baranyai tragically passed away in 1976, the conjecture appeared in print only in 1991 \cite{ohK}.
Later, independently, Bailey and Stevens posed related conjectures, see  paper \cite{JGO} for the  relevant references.

Finally let us mention that paper \cite{ohK2} contains some related earlier results and open problems.

\end{document}